\theoremstyle{plain}
\theoremstyle{plain}
\newtheorem{theorem}{Theorem}[section]
\newtheorem{lemma}[theorem]{Lemma}
\newtheorem{definition}[theorem]{Definition}
\numberwithin{equation}{section}
\renewcommand{\to}{\longrightarrow}
\begin{document}

\title[convergence of three-step iterative process   ]
{ strong convergence of three-step iterative process with errors
for three multivalued mappings}
\author[Mohammad Eslamian]{ M. Eslamian,  S. Homaeipour }
\date{}
%\thanks{$^*$This research was partially supported by    }
%\thanks{$^*$Supported by The Thailand Research Fund, Grant MRG5080375/2550}
\thanks{  }
\thanks{\it Email : {\rm   mhmdeslamian@gmail.com    }}
\maketitle

\vspace*{-0.5cm}

%\begin{center}
%{\footnotesize $^{1}$Department of Mathematics,

%\end{center}

\begin{center}
{\footnotesize  Department of Mathematics, IKIU, Qazvin 34149,
Iran. }
\end{center}

\hrulefill

{\footnotesize \noindent {\bf Abstract.} In this paper, we
introduced a three-step iterative process with errors for three
multivalued mappings satisfying the condition (C) in uniformly
convex Banach spaces and establish  strong convergence theorems
for the proposed process under some basic boundary conditions.
Our results generalized recent known results in the literature.

\vskip0.5cm

\noindent {\bf Keywords}:Fixed point, Condition (C) , Three-step
iteration process, strong convergence .\\ \noindent {\bf AMS
Subject Classification}: 47H10, 47H09. }

\hrulefill

\section{ Introduction }\label{S:1}
The study of fixed points for multivalued contractions and
nonexpansive mappings using the Hausdorff metric was initiated by
Markin \cite{ma} and Nadler \cite{na}. Since then the metric
fixed point theory of multivalued mappings has been rapidly
developed. The theory of multivalued mappings has applications in
control theory, convex optimization, differential equations and
economics. Theory of multivalued nonexpansive mappings is harder
than the corresponding theory of singlevalued nonexpansive
mappings. Different iterative processes have been used to
approximate fixed points of multivalued nonexpansive mappings. In
particular in 2005, Sastry and Babu \cite{sa} proved that the Mann
and Ishikawa iteration process for multivalued maping $T$ with a
fixed point $p$ converge to a fixed point $q$ of $T$ under certain
conditions. They also claimed that the fixed point $ q$ may be
different from $p$. Panyanak \cite{pa} extended result of Sastry
and Babu \cite{sa} to uniformly convex Banach spaces. Recently,
Song and Wang  \cite {so1} noted that there was a gap in the
proof of the main result in \cite{pa}. They further revised the
gap and also gave the affirmative answer to Panyanak's open
question.  Shahzad and Zegeye \cite{sh} extended and improved
results already appeared in the papers \cite{pa,sa,so1}. Very
recently, motivated by \cite{sh}, Cholamjiak and Suantai
\cite{c1,c2} introduced some new two-step iterative process for
two multivalued mappings in Banach spaces and prove strong
convergence of the proposed iterations.\par Glowinski and Le
Tallec \cite{gl} used three-step iterative process to find the
approximate solutions of the elastoviscoplasticity problem,
liquid crystal theory, and eigenvalue computation. It has been
shown in \cite{gl} that the three-step iterative process gives
better numerical results than the two-step and one-step
approximate iterations. In 1998, Haubruge et al. \cite{ha}
studied the convergence analysis of three-step process of
Glowinski and Le Tallec \cite{gl} and applied these process to
obtain new splitting-type algorithms for solving variation
inequalities, separable convex programming and minimization of a
sum of convex functions. They also proved that three-step
iterations lead to highly parallelized algorithms under certain
conditions. Thus we conclude that three-step process plays an
important and significant part in solving various problems, which
arise in pure and applied sciences.\par
  Now the aim of this paper is to introduce a three-step
iterative process  with errors for multivalued mappings
satisfying condition (C) and then prove some strong convergence
theorems for such process in uniformly convex Banach space. Both
Mann and Ishikawa iterative processes for multivalued mappings
can be obtained from this process as special cases by suitably
choosing the parameters. Our results generalized recent known
result in literature.
\section{Preliminaries}
Recall that a Banach space $X$ is said to be uniformly convex if
for each $t\in [0,2]$, the modulus of convexity of $X$ given
by:$$\delta (t)=inf\{1-\frac{1}{2}\|x+y\|: \|x\|\leq1, \|y\|\leq1
,\|x-y\|\geq t\}$$ satisfies the inequality $\delta(t)>0$ for all
$t>0.$ A Banach space $X$ is said to satisfy Opial's condition if
$x_{n}\to z$  weakly as $n\to\infty$  and $z\neq y$ imply that
$$\lim sup_{n\to\infty}\|x_{n}-z\|< \lim
sup_{n\to\infty}\|x_{n}-y\|.$$ All Hilbert spaces, all finite
dimensional Banach spaces and $\ell^{p} (1\leq p<\infty)$ have the
Opial property.\par
 A subset $E\subset X$ is called proximal if for each $x\in X$,
there exists an element $y\in E$ such that
$$\parallel x-y\parallel=dist(x,E)=inf\{\parallel x-z\parallel: z\in E\}.$$
It is known that every closed convex subset of a uniformly convex
Banach space is proximal.\\ We denote by $CB(E),K(E)$ and
 $P(E)$ the collection of all nonempty closed bounded subsets, nonempty compact
subsets, and nonempty proximal bounded subsets of $E$
respectively. The Hausdorff metric $H$ on $CB(X)$ is defined by
$$H(A,B):=\max \{\sup_{x\in A}dist(x,B),\sup_{y\in B}dist(y,A)\},$$for
all $A,B\in CB(X).$\\ Let $T:X\to 2^{X}$ be a multivalued mapping.
An element $x\in X$ is said to be a fixed point of $T$, if  $x\in
Tx$. The set of fixed points of $T$ will be denote by $F(T)$.

\begin{definition}
A multivalued mapping $T:X\to CB(X)$ is  called \\
(i) nonexpansive if $$H(Tx,Ty)\le \|x-y\|,\quad x,y\in X.$$ (ii)
quasi nonexpansive if $F(T)\neq \emptyset$ and $H(Tx,Tp)\le
\parallel x-p\parallel$ for all $x\in X$ and all $p\in
F(T).$\end{definition}
 In 2008, Suzuki
\cite{suz} introduced a condition  on mappings, called (C) which
is weaker than nonexpansiveness and stronger than quasi
nonexpansiveness. Very recently, Abkar and Eslamian \cite{ab} used
a modified Suzuki condition for multivalued mappings as follows:

\begin{definition}
A multivalued mapping $T:X\to CB(X)$ is said to satisfy condition
(C) provided that
$$ \frac{1}{2}dist(x,Tx)\le \|x-y\|\implies H(Tx,Ty)\le \|x-y\|,\quad
x,y\in X.$$ \end{definition}
\begin{lemma}(\cite{ab}) Let $T:X\to CB(X)$ be a multivalued  nonexpansive
mapping, then $T$ satisfies the condition (C).\end{lemma}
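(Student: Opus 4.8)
The plan is to exploit the purely logical structure of condition (C): it is an implication whose hypothesis is $\tfrac{1}{2}\,\mathrm{dist}(x,Tx)\le\|x-y\|$ and whose conclusion is $H(Tx,Ty)\le\|x-y\|$. The key observation I would make at the outset is that this conclusion is nothing other than the defining inequality of nonexpansiveness.

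First I would fix an arbitrary pair $x,y\in X$ and suppose that the hypothesis $\tfrac{1}{2}\,\mathrm{dist}(x,Tx)\le\|x-y\|$ holds. To verify condition (C) it then suffices to produce the inequality $H(Tx,Ty)\le\|x-y\|$. But since $T$ is assumed nonexpansive, by definition $H(Tx,Ty)\le\|x-y\|$ holds for this very pair $x,y$ — indeed for every pair in $X$, with no side condition attached. Hence the conclusion is available immediately, and the required implication holds for the chosen $x,y$. As $x,y$ were arbitrary, $T$ satisfies condition (C).

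The honest remark is that there is no real obstacle here, and so no step that I expect to be genuinely difficult: the consequent of the implication is valid unconditionally, so the implication is satisfied regardless of whether its antecedent happens to be true. In other words, nonexpansiveness is a strictly stronger property whose defining inequality already delivers the conclusion that condition (C) only demands under the restriction $\tfrac{1}{2}\,\mathrm{dist}(x,Tx)\le\|x-y\|$. For the same reason one expects the converse to fail in general, which is precisely why condition (C) is advertised as weaker than nonexpansiveness; this one-line argument is all that the statement requires.
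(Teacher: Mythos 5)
Your proof is correct: since nonexpansiveness gives $H(Tx,Ty)\le\|x-y\|$ for \emph{all} $x,y\in X$, the implication defining condition (C) holds vacuously of its antecedent, which is exactly the standard argument. The paper itself states this lemma only with a citation to the reference of Abkar and Eslamian and gives no proof, but the argument there is this same one-line observation, so there is nothing to add.
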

 \begin{lemma}(\cite{mh}) Let $T:X\to CB(X)$ be a multivalued mapping
 which satisfies the condition (C) and has a fixed point. Then T is a
quasi nonexpansive mapping. \end{lemma}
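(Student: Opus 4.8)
The plan is to verify the quasi-nonexpansive inequality $H(Tx,Tp)\le\|x-p\|$ directly from condition (C), exploiting the single feature that distinguishes a fixed point: the self-distance $\operatorname{dist}(p,Tp)$ vanishes. So I would let $p\in F(T)$, meaning $p\in Tp$, and immediately record that
$$\operatorname{dist}(p,Tp)=\inf\{\|p-z\|: z\in Tp\}=0,$$
since $p$ itself belongs to $Tp$. Then I would fix an arbitrary $x\in X$ and aim to bound $H(Tx,Tp)$.

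The decisive step is to invoke condition (C) with the \emph{first} argument taken to be $p$ and the \emph{second} to be $x$. The premise I must check is
$$\tfrac{1}{2}\operatorname{dist}(p,Tp)\le \|p-x\|,$$
and this holds automatically for every $x\in X$ because the left-hand side equals $0$ while the right-hand side is nonnegative. Condition (C) then delivers $H(Tp,Tx)\le\|p-x\|$ with no further work.

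To finish I would appeal to the symmetry of the Hausdorff metric, $H(Tx,Tp)=H(Tp,Tx)$, which is immediate from its definition as the maximum of two suprema, rewriting the bound as $H(Tx,Tp)\le\|x-p\|$. Since $x\in X$ was arbitrary and $F(T)\neq\emptyset$, this is exactly the definition of $T$ being quasi nonexpansive. The one point demanding care — and the only thing resembling an obstacle — is the order of the arguments: condition (C) is a one-directional implication, so it is essential to put the fixed point $p$ in the slot whose self-distance appears in the hypothesis. Attempting instead to control $\operatorname{dist}(x,Tx)$ would require information about the arbitrary point $x$ that we simply do not have, whereas placing $p$ there makes the premise trivially satisfied. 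Beyond this bookkeeping there is no genuine difficulty.
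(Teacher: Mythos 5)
Your proof is correct: since $p\in Tp$ forces $\operatorname{dist}(p,Tp)=0$, the premise of condition (C) with first argument $p$ holds trivially for every $x$, yielding $H(Tp,Tx)\le\|p-x\|$, which by symmetry of $H$ is exactly quasi nonexpansiveness. The paper itself cites this lemma from \cite{mh} without reproducing a proof, and your argument is precisely the standard one used there, including the key observation that the fixed point must occupy the slot whose self-distance appears in the hypothesis of condition (C).
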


\begin{lemma} (\cite{mh}) Let $E$ be a nonempty subset of a Banach space $X.$
Suppose $T:E\to P(E)$ satisfies condition (C) then \\
$$ H(Tx,Ty)\leq 2 dist(x,Tx)+\|x-y\|,$$ holds for all $x,y\in
E.$
\end{lemma}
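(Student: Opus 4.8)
The plan is to split the argument into two cases according to whether the hypothesis of condition (C) is already satisfied by the pair $(x,y)$. If $\frac{1}{2}dist(x,Tx)\le \|x-y\|$, then condition (C) immediately gives $H(Tx,Ty)\le\|x-y\|\le 2\,dist(x,Tx)+\|x-y\|$, since $dist(x,Tx)\ge 0$; this case is trivial. The whole content of the lemma lies in the complementary case $\|x-y\|<\frac{1}{2}dist(x,Tx)$, where condition (C) cannot be applied directly to $(x,y)$.

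To handle that case, I would exploit the proximality of the value $Tx\in P(E)$: there exists a point $z\in Tx$ with $\|x-z\|=dist(x,Tx)$. The idea is to route from $x$ to $y$ through $z$ and apply condition (C) twice, controlling $H(Tx,Ty)$ via the triangle inequality $H(Tx,Ty)\le H(Tx,Tz)+H(Tz,Ty)$ for the Hausdorff metric. For the first term, note that the hypothesis of condition (C) holds trivially for the pair $(x,z)$, because $\frac{1}{2}dist(x,Tx)\le dist(x,Tx)=\|x-z\|$; hence $H(Tx,Tz)\le\|x-z\|=dist(x,Tx)$.

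For the second term I would verify that condition (C) also applies to the pair $(z,y)$. The key chain of estimates is the following: since $z\in Tx$, the definition of the Hausdorff metric gives $dist(z,Tz)\le H(Tx,Tz)\le dist(x,Tx)$; while the reverse triangle inequality together with the case hypothesis $\|x-y\|<\frac{1}{2}dist(x,Tx)$ gives $\|z-y\|\ge\|x-z\|-\|x-y\|>\frac{1}{2}dist(x,Tx)\ge\frac{1}{2}dist(z,Tz)$. Thus the premise of condition (C) is met for $(z,y)$, so $H(Tz,Ty)\le\|z-y\|\le\|z-x\|+\|x-y\|=dist(x,Tx)+\|x-y\|$. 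Adding the two bounds yields exactly $H(Tx,Ty)\le 2\,dist(x,Tx)+\|x-y\|$.

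The main obstacle, and the one genuinely interesting step, is verifying that condition (C) is admissible for the auxiliary pair $(z,y)$ in the second case; this is precisely where both the proximality of $Tx$ (to produce $z$ with $\|x-z\|=dist(x,Tx)$) and the case hypothesis $\|x-y\|<\frac{1}{2}dist(x,Tx)$ are used to push $\|z-y\|$ above the threshold $\frac{1}{2}dist(z,Tz)$. Everything else is the triangle inequality for $H$ and the elementary bound $dist(z,Tz)\le H(Tx,Tz)$, which holds because $z\in Tx$.
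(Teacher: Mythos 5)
Your proof is correct: both cases check out, and the key step --- verifying that condition (C) may be applied to the auxiliary pair $(z,y)$ via $dist(z,Tz)\le H(Tx,Tz)\le dist(x,Tx)$ together with $\|z-y\|\ge \|x-z\|-\|x-y\|>\frac{1}{2}dist(x,Tx)$ --- is exactly the right use of proximality of $Tx$. Note that the paper itself states this lemma without proof (it is quoted from \cite{mh}); your argument is the standard multivalued adaptation of Suzuki's lemma and is essentially the same proof as in that cited reference, so there is nothing to flag.
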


\begin{lemma}(\cite{tan}, Lemma1) Let $\{a_{n}\}, \{b_{n}\}$ and $\{\delta_{n}\}$ be
sequence of nonnegative real numbers satisfying the inequality
$$a_{n+1}\leq(1+\delta_{n})a_{n}+b_{n}.$$ If
$ \sum_{n=1}^{\infty}\delta_{n}<\infty$ and $
\sum_{n=1}^{\infty}b_{n}<\infty$, then $\lim_{n\to\infty}a_{n}$
exists. In particular , if $\{a_{n}\}$ has a subsequence
converging to 0, then $\lim_{n\to\infty}a_{n}=0.$
\end{lemma}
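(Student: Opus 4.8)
\textbf{Proof } The plan is to reduce the perturbed recursion to a genuinely monotone sequence by normalizing away the multiplicative factor $1+\delta_n$. First I would record the elementary fact that $\sum_{n=1}^{\infty}\delta_n<\infty$ forces the infinite product $\prod_{n=1}^{\infty}(1+\delta_n)$ to converge to a finite number $M\ge 1$; this follows from $\log(1+\delta_n)\le \delta_n$, so that $\sum_{n=1}^{\infty}\log(1+\delta_n)\le \sum_{n=1}^{\infty}\delta_n<\infty$. Writing $\Pi_n=\prod_{j=1}^{n-1}(1+\delta_j)$ (with the convention $\Pi_1=1$), the partial products $\Pi_n$ increase to $M$ and stay between $1$ and $M$, and I set $v_n=a_n/\Pi_n\ge 0$.

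Dividing the hypothesis $a_{n+1}\le (1+\delta_n)a_n+b_n$ by $\Pi_{n+1}=(1+\delta_n)\Pi_n$ turns the recursion into $v_{n+1}\le v_n+b_n/\Pi_{n+1}\le v_n+b_n$, where the multiplicative perturbation has disappeared. The next step is to absorb the additive term $b_n$ using its summability: since $\sum_{n=1}^{\infty}b_n<\infty$, the tails $r_n=\sum_{j=n}^{\infty}b_j$ are finite and tend to $0$. I would then consider $w_n=v_n+r_n$ and check that $w_{n+1}=v_{n+1}+r_{n+1}\le v_n+b_n+r_{n+1}=v_n+r_n=w_n$, so that $\{w_n\}$ is nonincreasing; being also bounded below by $0$, it converges. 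Because $r_n\to 0$, the sequence $v_n=w_n-r_n$ converges as well, and finally $a_n=\Pi_n v_n\to M\lim_{n\to\infty}v_n$, which establishes that $\lim_{n\to\infty}a_n$ exists.

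The \emph{in particular} clause is then immediate: once the full limit $\lim_{n\to\infty}a_n=:a$ is known to exist, every subsequence converges to $a$, so if some subsequence tends to $0$ we must have $a=0$. The only real obstacle is the multiplicative factor $1+\delta_n$, which prevents the raw sequence $\{a_n\}$ from being monotone or Cauchy in any obvious way; the device of normalizing by the partial products $\Pi_n$—legitimate precisely because $\sum_{n=1}^{\infty}\delta_n<\infty$ guarantees that $\Pi_n$ stays bounded and bounded away from $0$—is what makes the argument go through, while the offset by the tail $r_n$ is the standard trick converting $v_{n+1}\le v_n+b_n$ into an honestly nonincreasing sequence.
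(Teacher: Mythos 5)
Your proof is correct. Note, though, that the paper does not prove this lemma at all: it is stated with a citation to Tan and Xu \cite{tan}, so there is no internal argument to compare yours against. What you wrote is a complete and standard proof of the perturbed version. The normalization by the partial products $\Pi_n=\prod_{j=1}^{n-1}(1+\delta_j)$ is legitimate precisely because $\delta_n\ge 0$ and $\sum_{n}\delta_n<\infty$ force $1\le\Pi_n\le M<\infty$, and it reduces the recursion to $v_{n+1}\le v_n+b_n$ (you correctly use $\Pi_{n+1}\ge 1$ to absorb the denominator); the tail-offset $w_n=v_n+\sum_{j\ge n}b_j$ is then genuinely nonincreasing (since $r_n=b_n+r_{n+1}$) and bounded below, hence convergent, and $a_n=\Pi_n v_n$ converges as a product of two convergent sequences. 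The \emph{in particular} clause is, as you say, immediate once the full limit exists. For comparison, the classical argument in \cite{tan} handles the unperturbed inequality $a_{n+1}\le a_n+b_n$ by essentially your tail trick (equivalently, by taking $\limsup$ over $m$ and then the infimum over $n$ in $a_{n+m}\le a_n+\sum_{j\ge n}b_j$), and the $(1+\delta_n)$ generalization is customarily obtained either by your product normalization or by the cruder bound $1+\delta_n\le e^{\delta_n}$ combined with a separate boundedness argument; your route is the cleaner of the two and requires no additional lemmas.
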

The following Lemma can be found in (\cite{noor}, Lemma 1.4)
\begin{lemma}Let $X$ ba a uniformly convex Banach space
and let $B_{r}(0)=\{x\in X: \parallel x\parallel\leq r\}$, $r>0$.
Then there exist a continuous, strictly increasing, and convex
function $\varphi:[0,\infty)\to [0,\infty)$ with $\varphi(0)=0$
such that
$$\parallel\alpha x+\beta y+\gamma z+\eta w
\parallel^{2}\leq\alpha\parallel x
\parallel^{2} +\beta\parallel y \parallel^{2}+\gamma\parallel z
\parallel^{2}+\eta\| w\|^{2}
-\alpha\beta \varphi(\parallel x-y
\parallel),$$ for all $x,y,z,w \in B_{r}(0)$, and $\alpha, \beta, \gamma, \eta\in[0,1]$ with
$\alpha+\beta+\gamma+\eta=1.$
\end{lemma}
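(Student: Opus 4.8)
The plan is to derive this four-term estimate from the classical two-term inequality in uniformly convex spaces, which carries all the analytic content. Recall (this is the $n=2$ case underlying \cite{noor}) that uniform convexity produces, via the modulus of convexity $\delta$, a continuous, strictly increasing, convex function $\varphi:[0,\infty)\to[0,\infty)$ with $\varphi(0)=0$ such that
$$\|\lambda u+(1-\lambda)v\|^{2}\leq \lambda\|u\|^{2}+(1-\lambda)\|v\|^{2}-\lambda(1-\lambda)\varphi(\|u-v\|)$$
holds for all $u,v\in B_{r}(0)$ and $\lambda\in[0,1]$. I would take exactly this $\varphi$ as the function claimed in the statement, reducing the remaining work to bookkeeping on the coefficients.

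First I would set $s=\alpha+\beta$ and treat the degenerate case $s=0$ separately: then $\alpha=\beta=0$, the subtracted term $\alpha\beta\,\varphi(\|x-y\|)$ vanishes, and the asserted inequality collapses to $\|\gamma z+\eta w\|^{2}\leq\gamma\|z\|^{2}+\eta\|w\|^{2}$, which is just Jensen's inequality for the convex map $v\mapsto\|v\|^{2}$ together with $\gamma+\eta=1$. In the main case $s>0$ I would regroup
$$\alpha x+\beta y+\gamma z+\eta w=s\Big(\frac{\alpha}{s}x+\frac{\beta}{s}y\Big)+\gamma z+\eta w,$$
which is a convex combination with nonnegative weights $s,\gamma,\eta$ summing to $1$. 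Applying convexity of $\|\cdot\|^{2}$ to this three-term combination gives
$$\|\alpha x+\beta y+\gamma z+\eta w\|^{2}\leq s\Big\|\frac{\alpha}{s}x+\frac{\beta}{s}y\Big\|^{2}+\gamma\|z\|^{2}+\eta\|w\|^{2}.$$

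Next I would apply the two-term inequality to the pair $x,y$ with $\lambda=\alpha/s$ and $1-\lambda=\beta/s$, obtaining
$$\Big\|\frac{\alpha}{s}x+\frac{\beta}{s}y\Big\|^{2}\leq\frac{\alpha}{s}\|x\|^{2}+\frac{\beta}{s}\|y\|^{2}-\frac{\alpha\beta}{s^{2}}\varphi(\|x-y\|).$$
Multiplying by $s$ and substituting into the previous display yields
$$\|\alpha x+\beta y+\gamma z+\eta w\|^{2}\leq\alpha\|x\|^{2}+\beta\|y\|^{2}+\gamma\|z\|^{2}+\eta\|w\|^{2}-\frac{\alpha\beta}{s}\varphi(\|x-y\|).$$
Finally, since $0<s=\alpha+\beta\leq1$ we have $1/s\geq1$, and as $\varphi\geq0$ this forces $\frac{\alpha\beta}{s}\varphi(\|x-y\|)\geq\alpha\beta\,\varphi(\|x-y\|)$; replacing the subtracted term by the smaller quantity $\alpha\beta\,\varphi(\|x-y\|)$ weakens the bound to precisely the claimed inequality.

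The only genuine obstacle is the two-term inequality itself: the construction of $\varphi$ from $\delta$ and the verification that it is continuous, strictly increasing, convex, and vanishes at $0$. Everything after that is elementary, and the deliberate choice to group the pair $(x,y)$ is exactly what isolates the factor $\alpha\beta\,\varphi(\|x-y\|)$ — grouping any other pair would produce the analogous estimate featuring that pair's discrepancy term instead.
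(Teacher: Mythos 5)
Your proposal is correct, but there is nothing in the paper to compare it against: the authors state this result as Lemma 2.7 and cite it to (\cite{noor}, Lemma 1.4) without giving any proof, so their ``route'' is simply a citation. Your derivation supplies the missing argument in the standard way. All the analytic content sits in the two-term inequality $\|\lambda u+(1-\lambda)v\|^{2}\leq\lambda\|u\|^{2}+(1-\lambda)\|v\|^{2}-\lambda(1-\lambda)\varphi(\|u-v\|)$ on $B_{r}(0)$, which is Xu's classical characterization of uniform convexity (the case $p=2$ of Theorem 2 in \cite{xu}, an item already in this paper's bibliography); you are right to treat that as citable rather than reprove it. The reduction itself is sound at every step: the degenerate case $\alpha+\beta=0$ follows from convexity of $v\mapsto\|v\|^{2}$; for $s=\alpha+\beta>0$ the grouping $s\left(\frac{\alpha}{s}x+\frac{\beta}{s}y\right)+\gamma z+\eta w$ is a convex combination with weights $s,\gamma,\eta$ summing to $1$, so Jensen's inequality applies; Xu's inequality applied to the pair $x,y\in B_{r}(0)$ with $\lambda=\alpha/s$ yields, after multiplying by $s$, the term $-\frac{\alpha\beta}{s}\varphi(\|x-y\|)$; and since $0<s\leq 1$ and $\varphi\geq 0$, discarding the factor $1/s\geq 1$ only weakens the bound, giving exactly the stated inequality with the required $\varphi$ (continuous, strictly increasing, convex, $\varphi(0)=0$, inherited directly from Xu's theorem). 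The only caveat worth recording is that your proof is conditional on that two-term inequality, which you do not establish; since it is a standard, well-known result, this is a citation rather than a gap. A further benefit of your method is that it makes transparent why the discrepancy term involves the pair $(x,y)$ and how the same grouping argument yields the general $n$-term version, which is how such lemmas are typically established in the literature.
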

\section{Main Results}
In this section we use the following iteration process.\\
(A) Let $X$ be a Banach space, $E$ be a nonempty convex subset of
$X$ and \\ $T_{1},T_{2},T_{3}:E\to CB(E)$ be three given mappings.
Then, for $x_{1}\in E$, we consider the following iterative
process:
$$w_{n}=(1-a_{n}-b_{n})x_{n}+a_{n}z_{n}+b_{n}s_{n} ,\qquad n\geq 1,$$

$$y_{n}=(1-c_{n}-d_{n}-e_{n})x_{n}+c_{n}u_{n}+d_{n}u'_{n}+e_{n}s_{n}' ,\qquad n\geq 1,$$

$$x_{n+1}=(1-\alpha_{n}-\beta_{n}-\gamma_{n})x_{n}+\alpha_{n}v_{n}
+\beta_{n}v'_{n}+\gamma_{n}s_{n}'',\qquad n\geq 1,$$ where
$z_{n},u'_{n}\in T_{1}(x_{n})$ ,  $u_{n},v'_{n}\in T_{2}(w_{n})$
and $v_{n}\in T_{3}(y_{n})$ and\\ $\{a_{n}\}, \{b_{n}\},
\{c_{n}\}, \{d_{n}\}, \{e_{n}\}, \{\alpha_{n}\}, \{\beta_{n}\},
\{\gamma_{n}\}\in [0,1]$  and $\{s_{n}\},\{s'_{n}\}$ and
$\{s''_{n}\}$ are bounded sequence in $E.$
\begin{definition}
A mapping $T:E\to CB(E)$ is said to satisfy condition (I) if
there is a non decreasing function $g:[0,\infty)\to [0,\infty)$
with $g(0)=0$, $g(r)>0$ for $r\in (0,\infty)$ such that
$$dist(x,Tx)\geq g(dist(x,F(T)).$$
Let  $T_{i}:E\to CB(E), (i=1,2,3)$ be three given mappings. The
mappings $T_{1},T_{2},T_{3}$ are said to satisfy condition (II) if
there exist a non decreasing function $g:[0,\infty)\to
[0,\infty)$ with $g(0)=0$, $g(r)>0$ for $r\in (0,\infty)$, such
that
$$\sum _{i=1}^{3}dist(x,T_{i}x)\geq g(dist(x,\mathcal {F})),$$
where $\mathcal {F}=\bigcap _{i=1}^{3}F(T_{i})$.
\end{definition}
\begin{theorem}
Let $E$ be a nonempty closed convex subset of a uniformly convex
Banach space $X$. Let $T_{i}:E \to CB(E),(i=1,2,3) $ be three
multivalued mappings satisfying the condition (C) . Assume that
$\mathcal {F}=\bigcap _{i=1}^{3}F(T_{i})\neq\emptyset$ and
$T_{i}(p)=\{p\}, (i=1,2,3)$ for each $p\in \mathcal {F}.$ Let
$\{x_{n}\}$ be the iterative process defined by (A), and
$a_{n}+b_{n}, c_{n}+d_{n}+e_{n}, \alpha_{n}+\beta_{n}+\gamma_{n}
\in [a,b]\subset(0,1)$ and also
$\sum_{n=1}^{\infty}b_{n}<\infty,\quad
\sum_{n=1}^{\infty}e_{n}<\infty$ and
$\sum_{n=1}^{\infty}\gamma_{n}<\infty$. Assume that $T_{1},T_{2}$
and $ T_{3}$ satisfying the condition (II). Then $\{x_{n}\}$
converges strongly to a common fixed point of $T_{1},T_{2}$ and $
T_{3}$.
\end{theorem}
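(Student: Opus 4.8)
The plan is to follow the standard three-stage template for such convergence theorems: first establish that $\lim_{n\to\infty}\|x_n-p\|$ exists for every $p\in\mathcal F$, then show $\mathrm{dist}(x_n,T_ix_n)\to0$ for $i=1,2,3$, and finally upgrade this to strong convergence using condition (II) together with the closedness of $\mathcal F$.

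For the first stage, fix $p\in\mathcal F$. Since each $T_i$ satisfies condition (C) and has $p$ as a fixed point, Lemma 2.4 makes $T_i$ quasi-nonexpansive, so from $T_ip=\{p\}$ I get $\|z_n-p\|,\|u'_n-p\|\le\|x_n-p\|$, $\|u_n-p\|,\|v'_n-p\|\le\|w_n-p\|$ and $\|v_n-p\|\le\|y_n-p\|$. Feeding these through the three lines of (A), using the triangle inequality and the boundedness of $\{s_n\},\{s'_n\},\{s''_n\}$ (say by $M$), I obtain successively $\|w_n-p\|\le\|x_n-p\|+b_nM$, then $\|y_n-p\|\le\|x_n-p\|+(b_n+e_n)M'$, and finally $\|x_{n+1}-p\|\le\|x_n-p\|+\sigma_n$ with $\sigma_n=O(b_n+e_n+\gamma_n)$. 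As $\sum b_n,\sum e_n,\sum\gamma_n<\infty$, Lemma 2.6 (with $\delta_n\equiv0$) gives that $\lim_n\|x_n-p\|$ exists; in particular $\{x_n\}$ and all auxiliary sequences are bounded, so after subtracting $p$ they lie in a common ball $B_r(0)$.

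The heart of the argument, and the step I expect to be the main obstacle, is showing $\mathrm{dist}(x_n,T_ix_n)\to0$. Writing $c=\lim_n\|x_n-p\|$, I apply Lemma 2.7 to each line of (A). For the $w_n$-line, pairing $x_n-p$ with $z_n-p$ yields
\[
\|w_n-p\|^2\le\|x_n-p\|^2+b_nM^2-(1-a_n-b_n)a_n\,\varphi\big(\|x_n-z_n\|\big),
\]
and analogous inequalities (pairing $x_n-p$ with $u_n-p$, resp.\ with $v_n-p$) for the $y_n$- and $x_{n+1}$-lines. Substituting the $w_n$- and $y_n$-estimates into the $x_{n+1}$-estimate gives a single inequality $\|x_{n+1}-p\|^2\le\|x_n-p\|^2+\theta_n-A_n\varphi(\|x_n-z_n\|)-B_n\varphi(\|x_n-u_n\|)-C_n\varphi(\|x_n-v_n\|)$ with $\sum\theta_n<\infty$. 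Telescoping and using that $\|x_n-p\|$ converges shows each of the nonnegative series $\sum A_n\varphi(\cdot),\sum B_n\varphi(\cdot),\sum C_n\varphi(\cdot)$ is finite. The delicate point is that the coefficients must stay bounded away from $0$: because $a_n+b_n,\,c_n+d_n+e_n,\,\alpha_n+\beta_n+\gamma_n\in[a,b]$ while $b_n,e_n,\gamma_n\to0$, the relevant products remain above a positive constant for large $n$, forcing $\varphi(\|x_n-z_n\|),\varphi(\|x_n-u_n\|),\varphi(\|x_n-v_n\|)\to0$, and the strict monotonicity of $\varphi$ gives $\|x_n-z_n\|,\|x_n-u_n\|,\|x_n-v_n\|\to0$. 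This bookkeeping is the most demanding part, precisely because the three step-coefficients get multiplied together when the estimates are composed.

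From $\|x_n-z_n\|\to0$ I get $\mathrm{dist}(x_n,T_1x_n)\le\|x_n-z_n\|\to0$, and also $\|x_n-w_n\|\to0$, $\|x_n-y_n\|\to0$ from the defining convex combinations. To transfer the $T_2$- and $T_3$-distances from $w_n,y_n$ back to $x_n$ I use a Lemma 2.5--type inequality, e.g. $\mathrm{dist}(x_n,T_2x_n)\le\|x_n-u_n\|+H(T_2w_n,T_2x_n)\le\|x_n-u_n\|+2\,\mathrm{dist}(w_n,T_2w_n)+\|w_n-x_n\|$ with $\mathrm{dist}(w_n,T_2w_n)\le\|w_n-u_n\|\to0$, and similarly for $T_3$ through $y_n$. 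Hence $\mathrm{dist}(x_n,T_ix_n)\to0$ for each $i$, and condition (II) yields $g(\mathrm{dist}(x_n,\mathcal F))\le\sum_i\mathrm{dist}(x_n,T_ix_n)\to0$, so $\liminf_n\mathrm{dist}(x_n,\mathcal F)=0$. The recursive bound $\mathrm{dist}(x_{n+1},\mathcal F)\le\mathrm{dist}(x_n,\mathcal F)+\sigma_n$ then lets me apply the final clause of Lemma 2.6 to upgrade this to $\lim_n\mathrm{dist}(x_n,\mathcal F)=0$. A routine estimate using $\|x_m-p^*\|\le\|x_N-p^*\|+\sum_{k=N}^{m-1}\sigma_k$ for a suitable $p^*\in\mathcal F$ shows $\{x_n\}$ is Cauchy, so $x_n\to q$ for some $q\in E$. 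Since condition (C) makes each $F(T_i)$ closed (if $x_k\to x$ with $x_k\in F(T_i)$, then $\tfrac12\mathrm{dist}(x_k,T_ix_k)=0\le\|x_k-x\|$ gives $\mathrm{dist}(x,T_ix)\le2\|x-x_k\|\to0$), $\mathcal F$ is closed, and $\mathrm{dist}(q,\mathcal F)\le\|q-x_n\|+\mathrm{dist}(x_n,\mathcal F)\to0$ forces $q\in\mathcal F$, completing the proof.
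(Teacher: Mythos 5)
Your proposal follows the paper's own proof essentially step for step through its two main stages: quasi-nonexpansiveness (Lemma 2.4) plus Lemma 2.6 to obtain existence of $\lim_n\|x_n-p\|$ with summable error $\theta_n=M(b_n+e_n+\gamma_n)$; then the Lemma 2.7 squared-norm estimates composed through the three lines of (A) and telescoped to force the $\varphi$-terms to zero; then Lemma 2.5 to transfer $\mathrm{dist}(\cdot,T_i\cdot)$ from $w_n,y_n$ back to $x_n$; and condition (II) to get $\mathrm{dist}(x_n,\mathcal F)\to0$. Where you genuinely diverge is the endgame. The paper extracts a subsequence $\{x_{n_k}\}$ together with points $p_k\in\mathcal F$ satisfying $\|x_{n_k}-p_k\|<2^{-k}$, proves that $\{p_k\}$ is Cauchy using inequality (3.1), and checks via condition (C) that its limit $q$ lies in $\mathcal F$; you instead prove that $\{x_n\}$ itself is Cauchy (from $\mathrm{dist}(x_n,\mathcal F)\to0$ and summability of the errors) and then invoke closedness of $\mathcal F$, which you derive from condition (C). The two endgames are equivalent in substance and effort; yours is slightly cleaner in that it avoids the auxiliary sequence $\{p_k\}$, and your observation that condition (C) alone forces each $F(T_i)$ to be closed is a nice self-contained replacement for the paper's final $\mathrm{dist}(p_k,T_i(q))\le H(T_i(p_k),T_i(q))$ computation.

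One caveat, which you yourself flagged as ``the delicate point'': your claim that the composed coefficients $A_n,B_n,C_n$ stay bounded away from zero does not actually follow from the stated hypotheses. The assumptions bound only the sums $a_n+b_n$, $c_n+d_n+e_n$, $\alpha_n+\beta_n+\gamma_n$ in $[a,b]$; combined with $b_n,e_n,\gamma_n\to0$ this bounds $a_n$, $c_n+d_n$ and $\alpha_n+\beta_n$ below for large $n$, but not $\alpha_n$, $\beta_n$, $c_n$ or $d_n$ individually, and those individual factors enter the products (e.g.\ $B_n=\alpha_n c_n(1-c_n-d_n-e_n)$, $C_n=\alpha_n(1-\alpha_n-\beta_n-\gamma_n)$). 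Nothing in the hypotheses rules out, say, $\alpha_n\equiv 0$ and $\beta_n\equiv a$, which annihilates the $\varphi(\|x_n-v_n\|)$ and $\varphi(\|x_n-u_n\|)$ terms and breaks that part of the argument. However, this is exactly the same leap the paper makes when it bounds $\tfrac12\alpha_n(1-c_n-d_n-e_n)d_n$ below by $\tfrac12 a^2(1-b)$, so your write-up is faithful to the published argument: it inherits the paper's own gap, which would be repaired by assuming each individual coefficient (rather than each sum) lies in $[a,b]$, or at least is bounded below by some positive constant.
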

\begin{proof}
Let $p\in \mathcal {F}.$ Then, by the boundedness  of
$\{s_{n}\},\{s_{n}'\}$ and $\{s_{n}''\}$, we let
$$ M=max\{sup_{n\geq1}\|s_{n}-p\|,sup_{n\geq1}\|s_{n}'-p\|,sup_{n\geq1}\|s_{n}''-p\|\}.$$
 Using (A) and quasi nonexpansiveness of $T_{i}$ (i=1,2,3)
 we have
\begin{multline*}
\parallel w_{n}-p\parallel=\parallel(1-a_{n}-b_{n})x_{n}+a_{n}z_{n}+b_{n}s_{n}-p
\parallel\\
\leq(1-a_{n}-b_{n})\parallel x_{n}-p\parallel+a_{n}\parallel z_{n}-p\parallel+b_{n}\|s_{n}-p\|\\
=(1-a_{n}-b_{n}\parallel x_{n}-p\parallel+a_{n}dist(z_{n},T_{1}(p))+b_{n}\|s_{n}-p\|\\
\leq(1-a_{n}-b_{n})\parallel x_{n}-p\parallel+a_{n}H(T_{1}(x_{n}),T_{1}(p))+b_{n}\|s_{n}-p\|\\
\leq(1-a_{n}-b_{n})\parallel x_{n}-p\parallel+a_{n}\parallel
x_{n}-p\parallel+b_{n}\|s_{n}-p\|\\ \leq
(1-b_{n})\|x_{n}-p\|+b_{n} M\\\leq\|x_{n}-p\|+b_{n}M
\end{multline*}
 and  \begin{multline*} \parallel
y_{n}-p\parallel=\parallel(1-c_{n}-d_{n}-e_{n})x_{n}+c_{n}u_{n}+d_{n}u'_{n}+e_{n}s_{n}'-p
\parallel\\\leq(1-c_{n}-d_{n}-e_{n})
\parallel x_{n}-p\parallel+c_{n}\parallel
u_{n}-p\parallel+d_{n}\parallel
u'_{n}-p\|+e_{n}\|s_{n}'-p\|\\\leq(1-c_{n}-d_{n}-e_{n})\parallel
x_{n}-p\parallel+c_{n}dist(u_{n},T_{2}(p))+d_{n}dist(u'_{n},T_{1}(p))+e_{n}\|s_{n}'-p\|\\
\leq(1-c_{n}-d_{n}-e_{n})\parallel
x_{n}-p\parallel+c_{n}H(T_{2}(w_{n}),T_{2}(p))+d_{n}H(T_{1}(x_{n}),T_{1}(p))+e_{n}\|s_{n}'-p\|\\
\leq(1-c_{n}-d_{n}-e_{n})\parallel x_{n}-p\parallel+c_{n}\parallel
w_{n}-p\parallel+d_{n} \parallel x_{n}-p\parallel+e_{n}\|s_{n}'-p\|\\
\leq(1-c_{n}-d_{n}-e_{n})\parallel
x_{n}-p\parallel+c_{n}\parallel x_{n}-p\parallel+d_{n} \parallel
x_{n}-p\parallel+c_{n}b_{n}M+e_{n}M\\\leq
\|x_{n}-p\|+b_{n}M+e_{n}M .\end{multline*} We also have
\begin{multline} \parallel
x_{n+1}-p\parallel=\parallel(1-\alpha_{n}-\beta_{n}-\gamma_{n})x_{n}
+\alpha_{n}v_{n}+\beta_{n}v'_{n}+\gamma_{n}s_{n}''-p
\parallel\\
\leq(1-\alpha_{n}-\beta_{n}-\gamma_{n})\parallel
x_{n}-p\parallel+\alpha_{n}\parallel
v_{n}-p\parallel+\beta_{n}\parallel
v'_{n}-p\|+\gamma_{n}\|s_{n}''-p\|\\\leq(1-\alpha_{n}-\beta_{n}-\gamma_{n})\parallel
x_{n}-p\parallel+\alpha_{n}dist(v_{n},T_{3}(p))+\beta_{n}dist(v'_{n},T_{2}(p))+\gamma_{n}\|s_{n}''-p\|\\
\leq(1-\alpha_{n}-\beta_{n}-\gamma_{n})\parallel
x_{n}-p\parallel+\alpha_{n}H(T_{3}(y_{n}),T_{3}(p))+\beta_{n}H(T_{2}(w_{n}),T_{2}(p))+\gamma_{n}\|s_{n}''-p\|\\
\leq(1-\alpha_{n}-\beta_{n}-\gamma_{n})\|
x_{n}-p\parallel+\alpha_{n}\parallel y_{n}-p\parallel+\beta_{n}
\parallel w_{n}-p\parallel+\gamma_{n}\|s_{n}''-p\|\\\leq
(1-\alpha_{n}-\beta_{n}-\gamma_{n})\parallel
x_{n}-p\parallel+\alpha_{n}\|x_{n}-p\|
+\alpha_{n}b_{n}M+\alpha_{n}e_{n}M+\beta_{n}\|x_{n}-p\|+\beta_{n}
b_{n} M+\gamma_{n}M \\\leq
(1-\gamma_{n})\|x_{n}-p\|+M(b_{n}+e_{n}+\gamma_{n})\\
=\|x_{n}-p\|+\theta_{n}.\end{multline} where $\theta_{n}=
M(b_{n}+e_{n}+\gamma_{n})$. By assumption we have
$\sum_{n=1}^{\infty}\theta_{n}< \infty.$
 Hence by Lemma 2.6  it follows that $\lim\parallel x_{n}-p\parallel$ exist for any
$p\in \mathcal {F}.$ Since the sequences
 $\{x_{n}\},\{y_{n}\} $ and $\{w_{n}\}$
are bounded, we can find $r>0$ depending on $p$ such that
$x_{n}-p,y_{n}-p,w_{n}-p\in B_{r}(0)$ for all $n\geq 0.$ Denote by
$$N=max\{sup_{n\geq1}\|s_{n}-p\|^{2},sup_{n\geq1}\|s_{n}'-p\|^{2},sup_{n\geq1}\|s_{n}''-p\|^{2}\}.$$
From Lemma 2.7, we get
\begin{multline*}
\parallel w_{n}-p\parallel^{2}=\parallel(1-a_{n}-b_{n})x_{n}+a_{n}z_{n}+b_{n}s_{n}-p
\parallel^{2}\\
\leq(1-a_{n}-b_{n})\parallel x_{n}-p\parallel^{2}+a_{n}\parallel
z_{n}-p\parallel^{2}
+b_{n}\|s_{n}-p\|^{2}-a_{n}(1-a_{n}-b_{n})\varphi(\|x_{n}-z_{n}\|)\\
\leq(1-a_{n}-b_{n}\parallel
x_{n}-p\parallel^{2}+a_{n}dist(z_{n},T_{1}(p))^{2}+b_{n}\|s_{n}-p\|^{2}
-a_{n}(1-a_{n}-b_{n})\varphi(\|x_{n}-z_{n}\|)\\
\leq(1-a_{n}-b_{n})\parallel
x_{n}-p\parallel^{2}+a_{n}H(T_{1}(x_{n}),T_{1}(p))^{2}
+b_{n}\|s_{n}-p\|^{2}-a_{n}(1-a_{n}-b_{n})\varphi(\|x_{n}-z_{n}\|)\\
\leq(1-a_{n}-b_{n})\parallel x_{n}-p\parallel^{2}+a_{n}\parallel
x_{n}-p\parallel^{2}+b_{n}\|s_{n}-p\|^{2}-a_{n}(1-a_{n}-b_{n})\varphi(\|x_{n}-z_{n}\|)\\
\leq (1-b_{n})\|x_{n}-p\|^{2}+b_{n}
N-a_{n}(1-a_{n}-b_{n})\varphi(\|x_{n}-z_{n}\|)\\\leq\|x_{n}-p\|^{2}+b_{n}N-a_{n}(1-a_{n}-b_{n})\varphi(\|x_{n}-z_{n}\|)
\end{multline*}
 It follows from Lemma 2.7 that  \begin{multline*} \parallel
y_{n}-p\parallel^{2}=\parallel(1-c_{n}-d_{n}-e_{n})x_{n}+c_{n}u_{n}+d_{n}u'_{n}+e_{n}s_{n}'-p
\parallel^{2}\\\leq (1-c_{n}-d_{n}-e_{n})
\parallel x_{n}-p\parallel^{2}+c_{n}\parallel
u_{n}-p\parallel^{2}+d_{n}\parallel
u'_{n}-p\|^{2}+e_{n}\|s_{n}'-p\|^{2}\\-\frac{1}{2}(1-c_{n}-d_{n}-e_{n})d_{n}
\varphi(\|x_{n}-u'_{n}\|)-\frac{1}{2}(1-c_{n}-d_{n}-e_{n})c_{n}
\varphi(\|x_{n}-u_{n}\|)\\\leq(1-c_{n}-d_{n}-e_{n})\parallel
x_{n}-p\parallel^{2}+c_{n}dist(u_{n},T_{2}(p))^{2}+d_{n}dist(u'_{n},T_{1}(p))^{2}+e_{n}\|s_{n}'-p\|^{2}\\
-\frac{1}{2}(1-c_{n}-d_{n}-e_{n})d_{n}
\varphi(\|x_{n}-u'_{n}\|)-\frac{1}{2}(1-c_{n}-d_{n}-e_{n})c_{n}
\varphi(\|x_{n}-u_{n}\|)\\
\leq(1-c_{n}-d_{n}-e_{n})\parallel
x_{n}-p\parallel^{2}+c_{n}H(T_{2}(w_{n}),T_{2}(p))^{2}+d_{n}H(T_{1}(x_{n}),T_{1}(p))^{2}+e_{n}\|s_{n}'-p\|^{2}
\\-\frac{1}{2}(1-c_{n}-d_{n}-e_{n})d_{n}
\varphi(\|x_{n}-u'_{n}\|)-\frac{1}{2}(1-c_{n}-d_{n}-e_{n})c_{n}
\varphi(\|x_{n}-u_{n}\|)\\
\leq(1-c_{n}-d_{n}-e_{n})\parallel
x_{n}-p\parallel^{2}+c_{n}\parallel w_{n}-p\parallel^{2}+d_{n}
\parallel x_{n}-p\parallel^{2}+e_{n}\|s_{n}'-p\|^{2}\\-\frac{1}{2}(1-c_{n}-d_{n}-e_{n})d_{n}
\varphi(\|x_{n}-u'_{n}\|)-\frac{1}{2}(1-c_{n}-d_{n}-e_{n})c_{n}
\varphi(\|x_{n}-u_{n}\|)\\
\leq(1-c_{n}-d_{n}-e_{n})\parallel
x_{n}-p\parallel^{2}+c_{n}\parallel x_{n}-p\parallel^{2}+d_{n}
\parallel x_{n}-p\parallel^{2}+c_{n}b_{n}N+e_{n}N\\-\frac{1}{2}(1-c_{n}-d_{n}-e_{n})d_{n}
\varphi(\|x_{n}-u'_{n}\|)-\frac{1}{2}(1-c_{n}-d_{n}-e_{n})c_{n}
\varphi(\|x_{n}-u_{n}\|)\\\leq
\|x_{n}-p\|^{2}+b_{n}N+e_{n}N\\-\frac{1}{2}(1-c_{n}-d_{n}-e_{n})d_{n}
\varphi(\|x_{n}-u'_{n}\|)-\frac{1}{2}(1-c_{n}-d_{n}-e_{n})c_{n}
\varphi(\|x_{n}-u_{n}\|) .\end{multline*} By another application
of Lemma 2.7 we obtain that
\begin{multline*} \parallel
x_{n+1}-p\parallel^{2}=\parallel(1-\alpha_{n}-\beta_{n}-\gamma_{n})x_{n}
+\alpha_{n}v_{n}+\beta_{n}v'_{n}+\gamma_{n}s_{n}''-p
\parallel^{2}\\
\leq(1-\alpha_{n}-\beta_{n}-\gamma_{n})\parallel
x_{n}-p\parallel^{2}+\alpha_{n}\parallel
v_{n}-p\parallel^{2}+\beta_{n}\parallel
v'_{n}-p\|^{2}+\gamma_{n}\|s_{n}''-p\|^{2}\\
\alpha_{n}(1-\alpha_{n}-\beta_{n}-\gamma_{n})\varphi(\|x_{n}-v_{n}\|)
\\\leq(1-\alpha_{n}-\beta_{n}-\gamma_{n})\parallel
x_{n}-p\parallel^{2}+\alpha_{n}dist(v_{n},T_{3}(p))^{2}+\beta_{n}dist(v'_{n},T_{2}(p))^{2}
+\gamma_{n}\|s_{n}''-p\|^{2}\\-\alpha_{n}(1-\alpha_{n}-\beta_{n}-\gamma_{n})\varphi(\|x_{n}-v_{n}\|)\\
\leq(1-\alpha_{n}-\beta_{n}-\gamma_{n})\parallel
x_{n}-p\parallel^{2}+\alpha_{n}H(T_{3}(y_{n}),T_{3}(p))^{2}+\beta_{n}H(T_{2}(w_{n}),T_{2}(p)^{2}+\gamma_{n}\|s_{n}''-p\|\\
-\alpha_{n}(1-\alpha_{n}-\beta_{n}-\gamma_{n})\varphi(\|x_{n}-v_{n}\|)\\
\leq(1-\alpha_{n}-\beta_{n}-\gamma_{n})\|
x_{n}-p\parallel^{2}+\alpha_{n}\parallel
y_{n}-p\parallel^{2}+\beta_{n}
\parallel w_{n}-p\parallel^{2}+\gamma_{n}\|s_{n}''-p\|^{2}\\-
 \alpha_{n}(1-\alpha_{n}-\beta_{n}-\gamma_{n})\varphi(\|x_{n}-v_{n}\|)\\\leq
(1-\alpha_{n}-\beta_{n}-\gamma_{n})\parallel
x_{n}-p\parallel^{2}+\alpha_{n}\|x_{n}-p\|^{2}
+\alpha_{n}b_{n}N+\alpha_{n}e_{n}N+\beta_{n}\|x_{n}-p\|^{2}+\beta_{n}
b_{n}
N+\gamma_{n}N\\-\alpha_{n}(1-\alpha_{n}-\beta_{n}-\gamma_{n})\varphi(\|x_{n}-v_{n}\|)
-\frac{1}{2}\alpha_{n}(1-c_{n}-d_{n}-e_{n})d_{n}
\varphi(\|x_{n}-u'_{n}\|)\\-\frac{1}{2}\alpha_{n}(1-c_{n}-d_{n}-e_{n})c_{n}
\varphi(\|x_{n}-u_{n}\|)
-a_{n}\beta_{n}(1-a_{n}-b_{n})\varphi(\|x_{n}-z_{n}\|)\\\leq
\|x_{n}-p\|^{2}+N(b_{n}+e_{n}+\gamma_{n})
-\alpha_{n}(1-\alpha_{n}-\beta_{n}-\gamma_{n})\varphi(\|x_{n}-v_{n}\|)
-\frac{1}{2}\alpha_{n}(1-c_{n}-d_{n}-e_{n})d_{n}
\varphi(\|x_{n}-u'_{n}\|)\\-\frac{1}{2}\alpha_{n}(1-c_{n}-d_{n}-e_{n})c_{n}
\varphi(\|x_{n}-u_{n}\|)
-a_{n}\beta_{n}(1-a_{n}-b_{n})\varphi(\|x_{n}-z_{n}\|).\end{multline*}
So, we have
\begin{multline*} \frac
{1}{2}a^{2}(1-b)\varphi(\|x_{n}-u'_{n}\|)\\\leq\frac{1}{2}
\alpha_{n}
(1-c_{n}-d_{n}-e_{n})d_{n}\varphi(\|x_{n}-u'_{n}\|)\\\leq\parallel
x_{n}-p\parallel^{2}-\parallel
x_{n+1}-p\parallel^{2}+N(b_{n}+e_{n}+\gamma_{n}).\end{multline*}
This implies that
$$\sum_{n=1}^{\infty}{a^{2}(1-b)\varphi(\|x_{n}-u'_{n}\|)}\leq\|x_{1}-p\|^{2}
+\sum_{n=1}^{\infty}N(b_{n}+e_{n} +\gamma_{n})<\infty$$ from
which it follows that $\lim
_{n\to\infty}\varphi(\|x_{n}-u'_{n}\|)=0.$  Since $\varphi$ is
continuous at $0$ and is strictly increasing, we have
$$\lim_{n\to\infty}\|x_{n}-u'_{n}\|=0.$$
Similarly we obtain that
$$\lim_{n\to\infty}\|x_{n}-z_{n}\|=\lim_{n\to\infty}\|x_{n}-u_{n}\|=\lim_{n\to\infty}\|x_{n}-v_{n}\|=0.$$
Hence we obtain $dist(x_{n},T_{1}x_{n})\leq\|x_{n}-u'_{n}\|\to 0$
as $n\to\infty.$ Also we have
$$\lim_{n\to\infty}\|x_{n}-w_{n}\|=\lim_{n\to\infty}(a_{n}\|z_{n}-x_{n}\|+b_{n}\|s_{n}-x_{n}\|)=0.$$
and
$$\lim_{n\to\infty}\|x_{n}-y_{n}\|=\lim_{n\to\infty}(c_{n}\|u_{n}-x_{n}\|+d_{n}\|u'_{n}-x_{n}\|+e_{n}\|s'_{n}-x_{n}\|)=0.$$

 Therefore by Lemma 2.5 we
have\begin{multline*}dist(x_{n},T_{2}(x_{n}))\leq dist(x_{n},T_{2}(w_{n}))+H(T_{2}(w_{n}),T_{2}(x_{n}))\\
\leq
dist(x_{n},T_{2}(w_{n}))+2\,dist(w_{n},T_{2}(w_{n}))+\|x_{n}-w_{n}\|\\\leq
3\,\|x_{n}-w_{n}\|+3\,dist(x_{n},T_{2}(w_{n}))\\\leq
3\,\|x_{n}-w_{n}\|+3\,\|x_{n}-u_{n}\|\to 0\qquad as\quad
n\to\infty.\end{multline*}and
\begin{multline*}dist(x_{n},T_{3}x_{n})\leq dist(x_{n},T_{3}(y_{n}))+H(T_{3}(y_{n}),T_{3}(x_{n}))\\
\leq
dist(x_{n},T_{3}(y_{n}))+2\,dist(y_{n},T_{3}(w_{n}))+\|x_{n}-y_{n}\|\\\leq
3\,\|x_{n}-y_{n}\|+3\,dist(x_{n},T_{3}(y_{n}))\\\leq
3\,\|x_{n}-y_{n}\|+3\,\|x_{n}-v_{n}\|\to 0\qquad as\quad
n\to\infty.\end{multline*}
 Note that by
our assumption $\lim_{n\to\infty}dist(x_{n},\mathcal {F})=0$.
Hence there exist a subsequence $\{x_{n_{k}}\}$ of $\{x_{n}\}$
and a sequence $\{p_{k}\}$ in $\mathcal {F}$ such that
$\|x_{n_{k}}-p_{k}\|<\frac{1}{2^k}$ for all $k$. Therefore  by
inequality 3.1 we get
\begin{multline*}
\|x_{n_{k+1}}-p\|\leq\|x_{n_{k+1}-1}-p\|+\theta_{n_{k+1}-1}\\\leq
\|x_{n_{k+1}-2}-p\|+\theta_{n_{k+1}-2}+\theta_{n_{k+1}-1}\\\leq...
\\\leq
\|x_{n_{k}}-p\|+\sum_{i=1}^{n_{k+1}-n_{k}-1}\theta_{n_{k}+i}
\end{multline*} For all $p\in \mathcal {F}.$ This implies that
\begin{multline*}\|x_{n_{k+1}}-p\|\leq \|x_{n_{k}}-p_{k}\|+\sum_{i=1}^{n_{k+1}-n_{k}-1}\theta_{n_{k}+i}\\
\leq\frac{1}{2^k}+\sum_{i=1}^{n_{k+1}-n_{k}-1}\theta_{n_{k}+i}.
\end{multline*}
Now, we show that$\{p_{k}\}$ is Cauchy sequence in $E$. Note that
\begin{multline*}\|p_{k+1}-p_{k}\|\leq
\|p_{k+1}-x_{n_{k+1}}\|+\|x_{n_{k+1}}-p_{k}\|\\
<\frac{1}{2^{k+1}}+\frac{1}{2^k}+\sum_{i=1}^{n_{k+1}-n_{k}-1}\theta_{n_{k}+i}
\\<\frac{1}{2^{k-1}}+\sum_{i=1}^{n_{k+1}-n_{k}-1}\theta_{n_{k}+i}.
\end{multline*} This implies that $\{p_{k}\}$ is Cauchy sequence in
$E$ and hence converges to $q\in E.$ Since for $i=1,2,3$
$$ dist(p_{k},T_{i}(q))\leq H(T_{i}(p_{k}),T_{i}(q))\leq \|p_{k}-q\|$$
and $p_{k}\to q$ as $n\to\infty$, it follows that $
dist(q,T_{i}(q))=0$ and thus $q\in \mathcal {F}$  and
$\{x_{n_{k}}\} $ converges strongly to $q$. Since
$\lim_{n\to\infty}\| x_{n}-q\|$ exists, we conclude that
$\{x_{n}\}$ converges strongly to $q$.
\end{proof}

\begin{theorem}
Let $E$ be a nonempty compact convex subset of uniformly convex
Banach space $X$. Let $T_{i}:E \to CB(E),(i=1,2,3) $ be three
multivalued mappings satisfying the condition (C) . Assume that
$\mathcal {F}=\bigcap _{i=1}^{3}F(T_{i})\neq\emptyset$ and
$T_{i}(p)=\{p\}, (i=1,2,3)$ for each $p\in \mathcal {F}.$ Let
$\{x_{n}\}$ be the iterative process defined by (A), and
$a_{n}+b_{n},c_{n}+d_{n}+e_{n},\alpha_{n}+\beta_{n}+\gamma_{n}
\in [a,b]\subset(0,1)$ and also
$\sum_{n=1}^{\infty}b_{n}<\infty,\quad
\sum_{n=1}^{\infty}e_{n}<\infty$ and
$\sum_{n=1}^{\infty}\gamma_{n}<\infty$. Then $\{x_{n}\}$
converges strongly to a common fixed point of $T_{1},T_{2}$ and $
T_{3}$.\end{theorem}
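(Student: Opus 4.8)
The plan is to recycle the first portion of the proof of Theorem 3.2 verbatim, since that part never invoked condition (II)---only the closing passage to strong convergence did. First I would verify, exactly as in Theorem 3.2, that $\lim_{n\to\infty}\|x_n-p\|$ exists for every $p\in\mathcal{F}$ (via Lemma 2.6), and that the boundedness of the iterates together with the uniform-convexity estimate of Lemma 2.7 forces
$$\lim_{n\to\infty}\|x_n-z_n\|=\lim_{n\to\infty}\|x_n-u_n\|=\lim_{n\to\infty}\|x_n-u'_n\|=\lim_{n\to\infty}\|x_n-v_n\|=0.$$
From these, the same computations as before (using $\sum b_n,\sum e_n,\sum\gamma_n<\infty$) give $\|x_n-w_n\|\to 0$ and $\|x_n-y_n\|\to 0$, and then Lemma 2.5 yields
$$\lim_{n\to\infty}dist(x_n,T_ix_n)=0,\qquad i=1,2,3.$$

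The point where the two proofs diverge is the source of the convergent subsequence. In Theorem 3.2 condition (II) supplied $dist(x_n,\mathcal{F})\to 0$; here the compactness of $E$ does the job directly, so that $\{x_n\}$ admits a subsequence $\{x_{n_k}\}$ with $x_{n_k}\to q$ for some $q\in E$. The main step is to confirm $q\in\mathcal{F}$. For each $i$ I would estimate
$$dist(q,T_iq)\leq\|q-x_{n_k}\|+dist(x_{n_k},T_ix_{n_k})+H(T_ix_{n_k},T_iq),$$
and bound the Hausdorff term by Lemma 2.5, namely $H(T_ix_{n_k},T_iq)\leq 2\,dist(x_{n_k},T_ix_{n_k})+\|x_{n_k}-q\|$, to obtain
$$dist(q,T_iq)\leq 2\|q-x_{n_k}\|+3\,dist(x_{n_k},T_ix_{n_k}).$$
Letting $k\to\infty$ gives $dist(q,T_iq)=0$; since $T_iq$ is closed (being in $CB(E)$), we get $q\in T_iq$. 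As this holds for $i=1,2,3$, we conclude $q\in\mathcal{F}$.

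Finally, because $q\in\mathcal{F}$, the first part guarantees that $\lim_{n\to\infty}\|x_n-q\|$ exists, while the chosen subsequence satisfies $\|x_{n_k}-q\|\to 0$; hence by the ``in particular'' clause of Lemma 2.6 the full limit is zero, and $\{x_n\}$ converges strongly to the common fixed point $q$. I expect the only substantive step to be the verification that $q\in\mathcal{F}$: once compactness is recognized as the replacement for condition (II), condition (C) (through Lemma 2.5) closes the argument exactly as the closedness of the fixed-point set did in Theorem 3.2, so no genuine obstacle should arise.
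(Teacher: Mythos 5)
Your proposal is correct and follows essentially the same route as the paper's own proof: compactness of $E$ replaces condition (II) in producing a convergent subsequence, the limit $q$ is shown to lie in $\mathcal{F}$ via the estimate $dist(q,T_iq)\leq 3\,dist(x_{n_k},T_ix_{n_k})+2\|x_{n_k}-q\|$ from Lemma 2.5, and the existence of $\lim_{n\to\infty}\|x_n-q\|$ (Lemma 2.6) upgrades subsequential to full strong convergence. The only difference is cosmetic: you correctly attribute the Hausdorff-distance bound to Lemma 2.5, whereas the paper cites Lemma 2.6 there by an apparent typo.
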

\begin{proof}
As in the proof of Theorem  3.2, we have
$\lim_{n\to\infty}dist(T_{i}(x_{n}),x_{n})=0, (i=1,2,3).$ Since
$E$ is compact, there exists a subsequence $\{x_{n_{k}}\}$ of
$\{x_{n}\}$ such that $\lim x_{n_{k}}=w$ for some $w\in E.$ By
lemma 2.6, for $i=1,2,3$  we have \begin{multline*}
dist(w,T_{i}(w))\leq \|w-x_{n_{k}}\|+ dist
(x_{n_{k}},T_{i}(w))\\\leq
\|w-x_{n_{k}}\|+dist(x_{n_{k}},T_{i}(x_{n_{k}}))+H(T_{i}(x_{n_{k}}),T_{i}(w))\\\leq
3dist(x_{n_{k}},T_{i}(x_{n_{k}}))+2 \|w-x_{n_{k}}\|\to 0\qquad
as\quad k\to\infty,\end{multline*} this implies that
 $w\in \mathcal {F} $ . Since
$\{x_{n_{k}}\}$ converges strongly to $w$ and \\$\lim_{n\to\infty}
\|x_{n}-w\|$ exist (as in thre proof of Theorem 3.2), this
implies that $\{x_{n}\}$ converges strongly to $w.$\end{proof}

We now intend to remove the restriction that $T_{i}(p)=p$ for each
$p\in \mathcal{F}.$
We define the following iteration process.\\
{\bf(B)}:\quad Let $X$ be a Banach space, $E$ be a nonempty convex
subset of $X$ and \\ $T_{i}:E\to P(E),(i=1,2,3)$ be given
mappings and
$$P_{T_{i}}(x)=\{y\in T_{i}(x) : \parallel x-y\parallel=dist (x,T_{i}(x))\}.$$ Then, for $x_{1}\in E$, we consider the following
iterative process:
$$w_{n}=(1-a_{n}-b_{n})x_{n}+a_{n}z_{n}+b_{n}s_{n} ,\qquad n\geq 1,$$

$$y_{n}=(1-c_{n}-d_{n}-e_{n})x_{n}+c_{n}u_{n}+d_{n}u'_{n}+e_{n}s_{n}' ,\qquad n\geq 1,$$

$$x_{n+1}=(1-\alpha_{n}-\beta_{n}-\gamma_{n})x_{n}+\alpha_{n}v_{n}
+\beta_{n}v'_{n}+\gamma_{n}s_{n}'',\qquad n\geq 1,$$ where
$z_{n},u'_{n}\in P_{T_{1}}(x_{n})$ ,  $u_{n},v'_{n}\in
P_{T_{2}}(w_{n})$ and $v_{n}\in P_{T_{3}}(y_{n})$ and\\
$\{a_{n}\}, \{b_{n}\}, \{c_{n}\}, \{d_{n}\}, \{e_{n}\},
\{\alpha_{n}\}, \{\beta_{n}\}, \{\gamma_{n}\}\in [0,1]$  and
$\{s_{n}\},\{s'_{n}\}$ and $\{s''_{n}\}$ are bounded sequence in
$E.$
\begin{theorem}
Let $E$ be a nonempty closed convex subset of a uniformly convex
Banach space $X$. Let $T_{i}:E \to P(E),(i=1,2,3)$ be multivalued
mappings such that  $P_{T_{i}} $ satisfing the condition (C). Let
$\{x_{n}\}$ be the iterative process defined by (B), and
$a_{n}+b_{n},c_{n}+d_{n}+e_{n},\alpha_{n}+\beta_{n}+\gamma_{n}
\in [a,b]\subset(0,1)$ and also
$\sum_{n=1}^{\infty}b_{n}<\infty,\quad
\sum_{n=1}^{\infty}e_{n}<\infty$ and
$\sum_{n=1}^{\infty}\gamma_{n}<\infty$. Assume that $T_{1},T_{2}$
and $ T_{3}$ satisfying the condition (II) and
$\mathcal{F}\neq\emptyset.$ Then $\{x_{n}\}$ converges strongly
to a common fixed point of $T_{1},T_{2}$ and $ T_{3}$.
\end{theorem}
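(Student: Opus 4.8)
The plan is to deduce this theorem from Theorem 3.2 by passing to the best-approximation operators $S_{i}:=P_{T_{i}}$ for $i=1,2,3$. The point of introducing process (B) is precisely that, although we no longer assume $T_{i}(p)=\{p\}$, the auxiliary maps $S_{i}$ automatically enjoy this property at their fixed points, so that the entire machinery of Theorem 3.2 applies to $S_{1},S_{2},S_{3}$ without change.

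First I would check that each $S_{i}$ is an admissible mapping $E\to CB(E)$. For every $x$ the set $P_{T_{i}}(x)$ is nonempty because $T_{i}(x)\in P(E)$ is proximal; it is bounded as a subset of the bounded set $T_{i}(x)$; and it is closed, being the intersection of the closed set $T_{i}(x)$ with the sphere $\{y:\|x-y\|=\mathrm{dist}(x,T_{i}(x))\}$. Hence $S_{i}:E\to CB(E)$. By hypothesis each $P_{T_{i}}=S_{i}$ satisfies condition (C), so Lemma 2.4 makes each $S_{i}$ quasi nonexpansive once we know it has a fixed point.

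The crucial step is to identify the fixed-point sets and establish single-valuedness at fixed points. If $p\in F(T_{i})$ then $p\in T_{i}(p)$, so $\mathrm{dist}(p,T_{i}(p))=0$ and therefore $P_{T_{i}}(p)=\{y\in T_{i}(p):\|p-y\|=0\}=\{p\}$; in particular $p\in F(S_{i})$ and $S_{i}(p)=\{p\}$. Conversely, if $p\in F(S_{i})$ then $p\in P_{T_{i}}(p)\subseteq T_{i}(p)$, so $p\in F(T_{i})$. Thus $F(S_{i})=F(T_{i})$, whence $\bigcap_{i=1}^{3}F(S_{i})=\mathcal{F}\neq\emptyset$ and $S_{i}(p)=\{p\}$ for each $p\in\mathcal{F}$. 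Moreover, since the nearest points realize the distance, $\mathrm{dist}(x,S_{i}(x))=\mathrm{dist}(x,T_{i}(x))$ for all $x\in E$, so condition (II) for $T_{1},T_{2},T_{3}$ transfers verbatim (same $g$, same $\mathcal{F}$) to $S_{1},S_{2},S_{3}$.

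Finally I would observe that, because $z_{n},u'_{n}\in P_{T_{1}}(x_{n})=S_{1}(x_{n})$, $u_{n},v'_{n}\in P_{T_{2}}(w_{n})=S_{2}(w_{n})$ and $v_{n}\in P_{T_{3}}(y_{n})=S_{3}(y_{n})$, the iterative process (B) for $(T_{1},T_{2},T_{3})$ is literally the process (A) for $(S_{1},S_{2},S_{3})$, with the identical step-parameter constraints and summability conditions. All the hypotheses of Theorem 3.2 are therefore met by $S_{1},S_{2},S_{3}$, and Theorem 3.2 yields that $\{x_{n}\}$ converges strongly to some $q\in\bigcap_{i=1}^{3}F(S_{i})=\mathcal{F}$, a common fixed point of $T_{1},T_{2},T_{3}$. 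I expect the only genuinely new work to be the verification that $S_{i}:E\to CB(E)$ and the identity $F(S_{i})=F(T_{i})$ together with $S_{i}(p)=\{p\}$ on $\mathcal{F}$; once these are in place the convergence is an immediate invocation of Theorem 3.2.
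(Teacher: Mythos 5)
Your proposal is correct and takes essentially the same route as the paper: the paper's own proof likewise rests on the observations that $P_{T_{i}}$ satisfies condition (C), that $P_{T_{i}}(p)=\{p\}$ for every $p\in\mathcal{F}$, and that fixed points and distances for $P_{T_{i}}$ coincide with those for $T_{i}$, after which it simply re-runs the argument of Theorem 3.2 (``by similar argument as in the proof of Theorem 3.2''). Your version packages this as a literal black-box invocation of Theorem 3.2 applied to $S_{i}=P_{T_{i}}$, which is a cleaner write-up and supplies details the paper leaves implicit (that $P_{T_{i}}(x)\in CB(E)$, that $F(P_{T_{i}})=F(T_{i})$, and that condition (II) transfers), but it is not a mathematically different argument.
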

\begin{proof}
Let $p\in \mathcal{F}.$ Then, for $i=1,2,3$ we have  $p\in
P_{T_{i}}(p)=\{p\}$ . Also, we have
 $$ \|z_n-p\|\leq dist(z_n,P_{T_{1}}(p))\leq
 H(P_{T_{1}}(x_{n}),P_{T_{1}}(p))\leq \|x_{n}-p\|$$
and
$$ \|u_n-p\|\leq dist(u_n,P_{T_{2}}(p))\leq
 H(P_{T_{2}}(w_{n}),P_{T_{2}}(p))\leq \|w_{n}-p\|,$$
and
$$ \|v_n-p\|\leq dist(v_n,P_{T_{3}}(p))\leq
 H(P_{T_{3}}(y_{n}),P_{T_{3}}(p))\leq \|y_{n}-p\|.$$
 Now, by similar argument as in the proof of Theorem 3.2, $\lim_{n\to\infty}\| x_{n}-q\|$  exists. Also we get
 a sequence  $\{p_{k}\}\in \mathcal{F}$ which converges to
some $q\in E$. Since for each $i=1,2,3$
$$dist(p_{k},T_{i}(q))\leq dist(p_{k},P_{T_{i}}(q))\leq H(P_{T_{i}}(p_{k}),P_{T_{i}}(q))\leq\parallel q-p_{k}\parallel,$$
and $p_{k}\to q$ as $k\to \infty$, it follows that
$dist(q,T_{i}(q))=0$ for $i=1,2,3$. Hence $q\in\mathcal{F} $ and
$\{x_{n_{k}}\}$ converges strongly to $q.$  Since
$\lim_{n\to\infty}\| x_{n}-q\|$ exists, we conclude that
$\{x_{n}\}$ converges strongly to $q.$
\end{proof}

\bibliographystyle{amsplain}
\begin {thebibliography}{1}

\bibitem{ab} A. Abkar,  M. Eslamian, \emph{ Fixed point theorems
for Suzuki generalized nonexpansive multivalued mappings in Banach
spaces}, Fixed Point Theory and Applications. \textbf{2010},
Article ID 457935, 10 pp (2010).
\bibitem{c1}
W. Cholamjiak, S. Suantai, \emph{ Strong convergence of a new
two-step iterative scheme for two quasi-nonexpansive multi-valued
maps in Banach spaces}, J. Nonlinear Anal. Optim. \textbf{1}
(2010) 131-137.
\bibitem{c2}
W. Cholamjiak, S. Suantai, \emph{Approximation of common fixed
point of two  quasi-nonexpansive multi-valued maps in Banach
spaces}, Comput. Math. Appl. (In press)
\bibitem{mh}M. Eslamian, A. Abkar, \emph{One- step iterative
process for a finite family of multivalued mappings}, Math.
Comput. Modell . ( In press)
\bibitem{gl} R. Glowinski,
P. Le Tallec, Augmented Lagrangian and Operator-Splitting Methods
in Nonlinear Mechanics, SIAM, Philadelphia, 1989.
\bibitem{ha} S. Haubruge, V. H. Nguyen, J.J. Strodiot, Convergence analysis and
applications of the Glowinski–Le Tallec splitting method for
finding a zero of the sum of two maximal monotone operators, J.
Optim. Theory Appl. \textbf{ 97 }(1998) 645-673.

\bibitem{is} S. Ishikawa, \emph{Fixed point by a new iteration method},
Proc. Amer. Math. Soc. \textbf{44} (1974) 147-150.
\bibitem{man}W. R. Mann, \emph{ Mean value methods in iteration},
Proc. Amer. Math. Soc. \textbf{4} (1953) 506-510.
\bibitem{ma}
J. Markin, \emph{A fixed point theorem for set valued mappings},
Bull. Amer. Math. Soc. \textbf{74} (1968) 639-640.
\bibitem{na}
 S. B. Nadler, Jr,  \emph{Multi-valued contraction mappings},
 Pacific J. Math. \textbf{30} (1969) 475-488.

\bibitem{noor}
K. Nammanee, M. A. Noor, S. Suantai, \emph{Convergence criteria of
modified Noor iterations with errors for asymptotically
nonexpansive mappings}, J. Math. Anal. Appl. \textbf{314} (2006)
320-334.

\bibitem{pa} B. Panyanak, \emph{Mann and Ishikawa iterative processes for
multivalued mappings in Banach spaces}, Comput. Math. Appl.
\textbf{54} (2007) 872-877.
\bibitem{re}S. Reich, \emph{Weak
convergence theorems for nonexpansive mappings in Banach space},
J. Math. Anal. Appl. \textbf{67} (1979) 274-276.

\bibitem{sa}
 K. P. R. Sastry, G. V. R. Babu, \emph{Convergence of Ishikawa iterates for a multivalued
mapping with a fixed point}, Czechoslovak Math. J. \textbf{55}
(2005)  817-826.

\bibitem{se} H. F. Senter, W.G. Dotson,
\emph{Approximating fixed points of nonexpansive mappings}, Proc.
Amer. Math. Soc. \textbf{44} (1974) 375-380.

\bibitem{sh} N. Shahzad, H. Zegeye, \emph{On Mann and Ishikawa
iteration schemes for multivalued maps in Banach space}. Nonlinear
Analysis. \textbf{71} (2009) 838-844.

 \bibitem{so1} Y. Song, H. Wang, Erratum to \emph{Mann
and Ishikawa iterative processes for multivalued mappings in
Banach spaces}, [Comput. Math. Appl.\textbf{ 54} (2007) 872-877],
Comput. Math. Appl. \textbf{55} (2008) 2999-3002.
 \bibitem{so2} Y. Song, H. Wang, \emph{Convergence of
iterative algorithms for multivalued mappings in Banach spaces},
Nonlinear Analysis. \textbf{70} (2009) 1547-1556.

\bibitem{suz}
T. Suzuki, \emph{Fixed point thoerems and convergence theorems for
some generelized nonexpansive mappings}, J. Math. Anal. Appl.
\textbf{340} (2008) 1088-1095.
\bibitem{tan} K. K. Tan, H. K. Xu,\emph{ Approximating fixed points of nonexpansive
mappings by the Ishikawa iteration process}, J. Math. Anal. Appl.
\textbf{178} (1993) 301-308.
\bibitem{xu} H. K. Xu,\emph{ Inequalities in Banach spaces with application}, Nonlinear
Analysis. \textbf{16} (1991) 1127-1138.

\end{thebibliography}

\end{document}